\newtheorem{theorem}{Theorem}[section]
\newtheorem{lemma}[theorem]{Lemma}
\newtheorem{corollary}[theorem]{Corolllary}
\newtheorem{proposition}[theorem]{Proposition}
\begin{document}
\title{A characterization of weak (semi-)projectivity for commutative C*-algebras \\ \vspace{0.5cm} \normalsize{An appendix to "A characterization of semiprojectivity for commutative C*-algebras" by Adam P.W. S\o rensen and Hannes Thiel}}
\author{Dominic Enders}
\maketitle

\begin{abstract}
We show that the spectrum $X$ of a weakly semiprojective, commutative C*-algebra $C(X)$ is at most one dimensional. This completes the work of S\o rensen and Thiel on the characterization of weak (semi-)projectivity for commutative C*-algebras. 
\end{abstract}

\section{Introduction}
Let $X$ be a compact, metric space. Then the following statements hold:\\
\[\begin{tabular}{lcl}
$C(X)$ is projective in $\mathcal{S}_1$ & $\Leftrightarrow$ & $X$ is an AR and $dim(X)\leq 1$ \\
$C(X)$ is semiprojective in $\mathcal{S}_1$ & $\Leftrightarrow$ & $X$ is an ANR and $dim(X)\leq 1$
\end{tabular}\]
Here we denote by $\mathcal{S}_1$ the category of all unital, separable C*-algebras with unital *-homomorphisms. The first equivalence stated above is due to Chigogidze and Dranishnikov, see \cite{CD10}, while the second one was recently proved by S\o rensen and Thiel in \cite{ST11}. In this article we show that furthermore:
\[\begin{tabular}{lcl}
$C(X)$ is weakly projective in $\mathcal{S}_1$ & $\Leftrightarrow$ & $X$ is an AAR and $dim(X)\leq 1$ \\
$C(X)$ is weakly semiprojective in $\mathcal{S}_1$ & $\Leftrightarrow$ & $X$ is an AANR and $dim(X)\leq 1$
\end{tabular}\]
Note that the $"\Leftarrow"$-implications have already been proved in \cite[Corollary 6.16]{ST11}. Hence the only (non-trivial) part left to show is the dimension estimate for weakly (semi-)projective C*-algebras $C(X)$. This will be the content of this paper.\\
The idea of proof is the same as the one in \cite[Proposition 3.1]{ST11}: We show that if $C(X)$ was weakly semiprojective and $X$ an AANR of dimension $>1$, we could solve a lifting problem which is known to be unsolvable. We would like to point out that, as noted in \cite[Remark 3.3]{ST11}, the existence of an inclusion $\mathbb{D}^2\hookrightarrow X$ would be a sufficient, but not necessary condition to construct such a lifting problem. Note also that, since the closed two-dimensional disc $\mathbb{D}^2$ is an absolute retract, an embedding $\mathbb{D}^2\hookrightarrow X$ would admit a leftinverse $X\rightarrow\mathbb{D}^2$. We will show that for our purpose it is enough to have (possibly non-injective) maps $\mathbb{D}^2\rightarrow X$ which are leftinvertible in an extremely weak sense. The existence of such maps in the case of an AANR $X$ with dimension $>1$ is the crucial point in our argumentation and its proof makes up the greatest part of this paper. As an application of our main result, we illustrate by an example how one can generalize the results of \cite[Section 6]{ST11} to the setting of weakly (semi-)projective, commutative C*-algebras.\\

\section{Proof of the main result}
We refer the reader to section 2 of \cite{ST11} for definitions of weakly (semi-)projective C*-algebras, of AA(N)Rs, for further terminology, notations and everything else necessary.\\

Let us start off by a technical lemma concerning continuous self-maps of the closed, two-dimensional disc $\mathbb{D}^2=\{(x,y)\in\mathbb{R}^2:x^2+y^2\leq 1\}$.
\begin{lemma}\label{1}
Let $f:\mathbb{D}^2\rightarrow\mathbb{D}^2$ be continuous with the property that $|f(z)-z|<\frac{\sqrt{3}}{2}$ for all $z\in S^1\subset\mathbb{D}^2$. Then there exists a continuous map $g:\mathbb{D}^2\rightarrow\mathbb{D}^2$ such that $\|(g\circ f)-id_{\mathbb{D}^2}\|_{\infty}<1$.
\end{lemma}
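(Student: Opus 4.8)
The plan is to build $g$ as an approximate left inverse of $f$ that acts fiberwise: for each $w\in f(\mathbb{D}^2)$ I will choose a value $g(w)$ lying close to the \emph{entire} preimage $f^{-1}(w)$, so that $g\circ f$ cannot displace any point by as much as $1$. Writing $S_w:=f^{-1}(w)$ and $\Phi(w,p):=\sup_{z\in S_w}|p-z|$, it suffices to produce a continuous $g\colon\mathbb{D}^2\to\mathbb{D}^2$ with $\Phi(w,g(w))<1$ for every $w$: indeed $\|g\circ f-\mathrm{id}\|_\infty=\max_{z}|g(f(z))-z|$ is attained at some $z^\ast$, and with $w^\ast=f(z^\ast)$ one gets $|g(f(z^\ast))-z^\ast|\le\Phi(w^\ast,g(w^\ast))<1$. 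Thus everything reduces to a geometric claim: \emph{every nonempty fiber $S_w$ is contained in an open disc of radius $1$ whose centre lies in $\mathbb{D}^2$}, i.e. its smallest enclosing disc has radius $<1$.

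To prove this claim I would let $p$ be the centre of the smallest enclosing disc of $S_w$ and $R$ its radius; by a standard fact $p$ lies in the convex hull of the contact set $F=\{z\in S_w:|z-p|=R\}$, and since $F\subseteq S_w\subseteq\mathbb{D}^2$ also $p\in\mathbb{D}^2$. Assume $R\ge1$. If $p\ne0$, then for each $z\in F$ the inequalities $|z-p|^2=R^2\ge1$ and $|z|\le1$ force $\langle z,p\rangle\le|p|^2/2<|p|^2$, so all of $F$, and hence $p\in\operatorname{conv}(F)$, lies in the half-plane $\{\langle\,\cdot\,,p\rangle\le|p|^2/2\}$, which does not contain $p$: a contradiction. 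If $p=0$, then $R=\max_{z\in S_w}|z|=1$ forces $F\subseteq S^1$; these are boundary points of the fiber, so the hypothesis gives $|z-w|=|f(z)-z|<\tfrac{\sqrt3}{2}$ for each, whence $F$ lies in the arc $S^1\cap B(w,\tfrac{\sqrt3}{2})$. A short computation (minimising $\tfrac{|w|}{2}+\tfrac{1}{8|w|}$) shows this arc has angular width $<120^\circ$, so it lies in an open half-plane through $0$ and again $0\notin\operatorname{conv}(F)$, a contradiction. This is the step where the constant $\tfrac{\sqrt3}{2}$ is essential, and I expect it to be the main obstacle.

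Finally I would upgrade the pointwise claim to a continuous selection. For each $w_0$ pick a centre $p_{w_0}\in\mathbb{D}^2$ with $\Phi(w_0,p_{w_0})<1$; since $f$ is continuous on a compact space its fibers are upper semicontinuous, so $\Phi(\,\cdot\,,p_{w_0})<1$ on a neighbourhood $U_{w_0}$. Choosing a partition of unity $\{\rho_i\}$ on the compact set $f(\mathbb{D}^2)$ subordinate to such neighbourhoods and setting $g(w)=\sum_i\rho_i(w)\,p_{w_i}$ yields a continuous $\mathbb{D}^2$-valued map, and convexity of $p\mapsto\Phi(w,p)$ gives $\Phi(w,g(w))\le\sum_i\rho_i(w)\,\Phi(w,p_{w_i})<1$. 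Extending $g$ over $\mathbb{D}^2$ by Tietze and composing with the nearest-point retraction onto $\mathbb{D}^2$ (which leaves the values on $f(\mathbb{D}^2)$ unchanged) completes the construction, and the reduction of the first paragraph then gives $\|g\circ f-\mathrm{id}\|_\infty<1$.
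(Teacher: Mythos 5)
Your proof is correct, but it takes a genuinely different route from the paper's. The paper constructs $g$ explicitly as a radial collapse: $g(z)=r(z)z$, where $r\le t$ is a cut-off function vanishing away from an annular neighbourhood of $f(S^1)$, so that $g\circ f$ sends interior points into a small disc around the origin (giving $|g(f(z))-z|\le t+|z|<1$ when $|z|<1-t$) and sends near-boundary points onto short radial segments whose argument agrees with that of $f(z)$, hence differs from the argument of $z$ by less than $\pi/3$. You instead prove a fiberwise statement --- every fiber $f^{-1}(w)$ has circumradius strictly less than $1$ --- via the standard fact that the circumcenter lies in the convex hull of the contact set, and then glue local constant choices of centers by a partition of unity, using convexity of $p\mapsto\sup_{z\in f^{-1}(w)}|p-z|$ and upper semicontinuity of the fibers. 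Both arguments hinge on the same sharp computation (the extremal configuration $|e^{\pm i\pi/3}-\tfrac12|=\tfrac{\sqrt3}{2}$: being $\tfrac{\sqrt3}{2}$-close to a point of the disc forces angular closeness below $60^\circ$ on the circle), but your route isolates the true obstruction more cleanly: your case $p\ne 0$ shows that \emph{any} compact subset of $\mathbb{D}^2$ with circumradius $\ge 1$ must have circumcenter at the origin, so the hypothesis on $f|_{S^1}$ is needed only to exclude fibers containing nearly antipodal boundary points; your argument also generalizes verbatim to $\mathbb{D}^n$. What you give up is explicitness: the paper's $g$ is concrete (and has uniformly small norm), whereas yours comes from an abstract selection. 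One cosmetic correction: the arc $S^1\cap B\left(w,\tfrac{\sqrt3}{2}\right)$ has angular width $2\arccos\left(\tfrac{1}{8|w|}+\tfrac{|w|}{2}\right)\le 120^\circ$, with equality at $|w|=\tfrac12$, not strictly less than $120^\circ$ as you claim; but since it is an \emph{open} arc it still lies in the open half-plane $\{\langle\cdot,w\rangle>0\}$, so your conclusion $0\notin\operatorname{conv}(F)$, and with it the whole proof, stands.
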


\begin{proof}
A simple compactness argument shows that there is a $1-\frac{\sqrt{3}}{2}>t>0$ such that $|z|\geq 1-t$ implies $\left|\frac{z}{|z|}-f(z)\right|<\frac{\sqrt{3}}{2}$. Define $r_{max}, r_{min}:S^2\rightarrow [0,1]$ by 
\[r_{max}(z)=\max\{r\geq 0:rz\in f(S^1)\},\] resp. by \[r_{min}(z)=\min\{r\geq 0:rz\in f(S^1)\}.\] Notice that $r_{max}$ and $r_{min}$ are well defined, continuous and satisfy \[r_{max}\geq r_{min}> t\] by the assumption on $f$ and the choice of the parameter $t$.\\
Now let $r:\mathbb{D}^2\rightarrow [0,1]$ be the function
\begin{itemize}\item which equals $0$ on $\{se^{i\varphi}:s\leq r_{min}(e^{i\varphi})-t\}\cup\{se^{i\varphi}:s\geq r_{max}(e^{i\varphi})+t\}$, \item which equals $t$ on $\{se^{i\varphi}:r_{min}(e^{i\varphi})\leq s\leq r_{max}(e^{i\varphi})\}$ and \item such that for all $z\in S^1$ the maps $[0,1]\rightarrow[0,1]$, $s\mapsto r(sz)$ are linear on $[r_{min}(z)-t,r_{min}(z)]$ and $[r_{max}(z),r_{max}(z)+t]$.\end{itemize}
This map will be continuous. Finally, let $g:\mathbb{D}^2\rightarrow\mathbb{D}^2$ be given by
\[g(z)=r(z)z\]
Then $g$ is continuous with $\|g\|_{\infty}=t$ and we claim that $g$ satisfies $|(g\circ f)(z)-z|<1$ for all $z\in\mathbb{D}^2$. This is immediate for $|z|< 1-t$ since in this case $|(g\circ f)(z)-z|\leq t+|z|<1$. If $z=e^{i\varphi_0}$ we have $r(f(z))=t$ by construction of $r$ and $0\neq f(z)=|f(z)|e^{i\varphi}$ with $|\varphi-\varphi_0|<\frac{\pi}{3}$ by the assumption on $f$. Hence $(g\circ f)(z)=|(g\circ f)(z)|e^{i\varphi}$ with $|\varphi-\varphi_0|<\frac{\pi}{3}$ and $0<|(g\circ f)(z)|\leq t$. It follows that
\[|(g\circ f)(z)-z|\leq |te^{i\varphi_0}-z|+|(g\circ f)(z)-e^{i\varphi_0}|<(1-t)+t=1.\]
Now consider a fixed $z$ with $1-t\leq |z|< 1$ and write $z=|z|e^{i\varphi_0}$. The parameter $t$ was chosen in such a way that $f(z)=|f(z)|e^{i\varphi}$ with $|\varphi-\varphi_0|<\frac{\pi}{3}$. Hence $(g\circ f)(z)=|(g\circ f)(z)|e^{i\varphi}$ with $|\varphi-\varphi_0|<\frac{\pi}{3}$ and $|(g\circ f)(z)|\leq t$. Consequently, we estimate
\[|(g\circ f)(z)-z|\leq|te^{i\varphi_0}-z|+|(g\circ f)(z)-te^{i\varphi_0}|\leq (|z|-t)+t<1.\]
Using compactness of $X$ and continuity of $g\circ f$ we get the desired uniform estimate.
\end{proof}

The following is a refined version of an argument used in the proof of \cite[Proposition 3.1]{ST11}.

\begin{proposition}\label{2}
Let $X$ be a compact AANR with $dim(X)>1$. Then there exists a point $x_0\in X$ such that every neighbourhood of $x_0$ admits a topological embedding of $S^1$.
\end{proposition}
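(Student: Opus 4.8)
The plan is to split the argument into a purely dimension-theoretic localization and a classical fact about Peano continua, with the whole force of the AANR hypothesis concentrated in one place: local connectedness.

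First I would localize the dimension. For separable metric spaces one has $\dim X=\sup_{x\in X}\dim_x X$, where $\dim_x X=\inf\{\dim U:U\text{ an open neighbourhood of }x\}$ (the standard agreement of dimension with local dimension, via the countable sum theorem). Since $\dim X>1$, there is a point $x_0\in X$ with $\dim_{x_0}X\ge 2$, and this is exactly the assertion that $\dim U\ge 2$ for \emph{every} neighbourhood $U$ of $x_0$. I claim this $x_0$ works, so I fix an arbitrary neighbourhood $U$ of $x_0$ and aim to produce a topologically embedded $S^1$ inside $U$.

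Next I would pass to a well-behaved subcontinuum. Granting that the compact AANR $X$ is locally connected, the component $X_0$ of $x_0$ is open and is a Peano continuum; by the Bing--Moise theorem $X_0$ carries a convex metric, in which small closed balls are Peano subcontinua and form a neighbourhood basis of $x_0$. I would pick such a ball $K=\bar B(x_0,\rho)$ with $K\subseteq U$. Since $K$ is a neighbourhood of $x_0$ we have $\dim K\ge\dim_{x_0}X\ge 2$. Now comes the classical input: a Peano continuum contains no topologically embedded circle if and only if it is a dendrite, and every dendrite is at most one-dimensional. As $\dim K\ge 2$, the Peano continuum $K$ cannot be a dendrite, so it contains a simple closed curve, i.e.\ an embedded copy of $S^1$, and this copy lies in $K\subseteq U$. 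Because $U$ was an arbitrary neighbourhood of $x_0$, the proposition follows.

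The main obstacle is precisely the local connectedness used above, and this is the only point where the AANR hypothesis can enter: the conclusion genuinely fails for arbitrary continua of dimension $2$, for instance hereditarily indecomposable ones, which contain no arcs at all. I would derive local connectedness from the approximate domination of $X$ by finite polyhedra: for every $\varepsilon>0$ there are maps $i\colon X\to P$ and $r\colon P\to X$ with $d(r\circ i,\mathrm{id}_X)<\varepsilon$. Since $P$ is locally connected, one transports connected neighbourhoods of $i(x)$ forward by $r$; the delicate point, which I expect to be the technical heart of this step, is that $r$ need not be open, so one obtains only small connected \emph{subsets} near $x$ rather than connected \emph{neighbourhoods}. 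Upgrading this to genuine local connectedness (connectedness im kleinen) requires combining the modulus of continuity of a fixed $r$ with a limiting argument as $\varepsilon\to 0$; alternatively one may simply invoke the known fact that compact AANRs are $LC^{0}$.
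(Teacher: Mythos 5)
Your reduction to ``a Peano continuum of dimension $\geq 2$ contains a simple closed curve'' is sound (it is essentially \cite[Proposition 3.1]{CD10}, which the paper also invokes), and the localization $\dim X=\sup_x\dim_x X$ is unproblematic for compact metric spaces. The fatal step is exactly the one you flagged as the place where the AANR hypothesis must enter: compact AANRs are \emph{not} locally connected, and in particular not $LC^0$. The Warsaw circle $W$ is a compact AANR --- indeed it is the motivating example behind the whole theory in \cite{ST11}, since $C(W)$ is weakly semiprojective but not semiprojective precisely because $W$ is an AANR but not an ANR --- and it fails to be locally connected at every point of its limit segment. So the ``known fact'' you propose to invoke is false, and no limiting argument with approximate dominations $r\circ i\approx \mathrm{id}_X$ can rescue it: approximate retractions transport connectedness only up to an $\varepsilon$-error, which is exactly how AANRs escape local connectedness while ANRs do not. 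Worse, your $x_0$ is pinned down by the dimension condition \emph{before} any connectedness is available, and nothing prevents it from being a point of non-local-connectedness: in the $2$-dimensional AANR $W\times[0,1]$ every point satisfies your condition $\dim_{x_0}\geq 2$, including points over the limit segment, which have no small connected neighbourhoods at all; there the Bing--Moise step has nothing to start from, even though the conclusion of the proposition happens to hold.

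The paper circumvents this by never asking $X$ itself to be locally connected anywhere. By Gorniewicz's theorem \cite[Theorem 3.4]{Gor99}, $X$ sits inside an ANR $Y$ admitting $\varepsilon$-retractions $r_\varepsilon:Y\to X$; the image $r_\varepsilon(Y)\subseteq X$ is a Peano continuum (a continuous image of a Peano continuum), and for small $\varepsilon$ it has full dimension $\dim(X)$ --- proved not by localizing in $X$ but by the $\varepsilon$-mapping theorem of Hurewicz--Wallman. The dimension is then localized \emph{inside the Peano continuum} $r_\varepsilon(Y)$, where closures of small open connected neighbourhoods are Peano continua of dimension $>1$ and \cite[Proposition 3.1]{CD10} applies; finally, any neighbourhood $V$ of $x_0$ in $X$ meets $r_\varepsilon(Y)$ in a neighbourhood of $x_0$ in $r_\varepsilon(Y)$, so the circles found there lie in $V$. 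If you want to repair your argument, this is the missing idea: replace ``$X$ is locally connected'' by ``$X$ contains a Peano subcontinuum of full dimension'' and do all local work inside that subcontinuum.
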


\begin{proof}
As shown in \cite[Theorem 3.4]{Gor99} there exists an ANR $Y$ such that $X$ is (homeomorphic to) an approximative retract of Y. This means we have $X\subseteq Y$ and for all $\epsilon>0$ there exists an $\epsilon$-retract $r_{\epsilon}:Y\rightarrow X$ (which again means that $d(x,r_{\epsilon}(x))<\epsilon$ for every $x\in X$).\\
We claim that for some $\epsilon>0$ we have $dim(r_\epsilon(Y))=dim(X)$. This follows from \cite[Corollary to Theorem V.9]{HW48}, since (following the terminology from \cite{HW48}) for every $\epsilon$-mapping $p$ from $r_{\epsilon}(Y)$ to some space $Z$, the composition 
\[\begin{xy}\xymatrix{X \ar[r]^{\subseteq} & Y \ar[r]^{r_{\epsilon}} & r_{\epsilon}(Y) \ar[r]^p & Z}\end{xy}\]
will be a $(3\epsilon)$-mapping on $X$.\\
Now fix some $\epsilon>0$ such that $dim(r_{\epsilon}(Y))=dim(X)>1$. 
By compactness we have $locdim(r_{\epsilon}(Y))=dim(r_{\epsilon}(Y))>1$. This means there exists a point $x_0\in r_{\epsilon}(Y)$ such that $dim(U)>1$ for every closed neighbourhood $U$ of $x_0$ in $r_{\epsilon}(Y)$. Since $Y$ is an ANR, it is a Peano continuum and so will be every continuous image of it. Hence $r_{\epsilon}(Y)$ is a Peano continuum of dimension $>1$ and so will be every closure of an open, connected neighbourhood $U$ of $x_0$ in $r_{\epsilon}(Y)$. Thus we may apply \cite[Proposition 3.1]{CD10} to obtain topological embeddings $S^1\hookrightarrow U$ for every such $U$. Now if $V$ is a closed neighbourhood of $x_0$ in $X$, $V\cap r_{\epsilon}(Y)$ is a closed neighbourhood of $x_0$ in $r_{\epsilon}(Y)$ and hence $S^1\hookrightarrow V\cap r_{\epsilon}(Y)\subseteq V$ as shown above.
\end{proof}

\begin{theorem}\label{3}
Let $X$ be a compact AANR with $dim(X)>1$. Then the following holds: There exist continuous maps $f:\mathbb{D}^2\rightarrow X$ and $g:X\rightarrow\mathbb{D}^2$ such that the diagram\\
\[\begin{xy}
\xymatrix{ & X \ar[dr]^g \\
\mathbb{D}^2 \ar[ur]^f \ar[rr]^{id} & & \mathbb{D}^2}
\end{xy}\]
commutes up to a constant strictly less then $1$, i.e. $\|g\circ f-id_{\mathbb{D}^2}\|_{\infty}<1$.
\end{theorem}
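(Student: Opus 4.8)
The plan is to reduce the statement to Lemma \ref{1}. Concretely, I would first produce an auxiliary map $\tilde g : X \to \mathbb{D}^2$ together with $f : \mathbb{D}^2 \to X$ for which the composite $h := \tilde g \circ f : \mathbb{D}^2 \to \mathbb{D}^2$ already satisfies $|h(z)-z| < \frac{\sqrt 3}{2}$ for all $z \in S^1$. Lemma \ref{1} then furnishes a map $g' : \mathbb{D}^2 \to \mathbb{D}^2$ with $\|g' \circ h - \mathrm{id}_{\mathbb{D}^2}\|_\infty < 1$, and setting $g := g' \circ \tilde g : X \to \mathbb{D}^2$ gives $g \circ f = g' \circ h$, so that $\|g \circ f - \mathrm{id}_{\mathbb{D}^2}\|_\infty < 1$ as required. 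Thus everything comes down to the boundary estimate for $\tilde g \circ f$ on $S^1$; note that Lemma \ref{1} imposes no condition on the interior, so $f$ may behave arbitrarily on the open disc.

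The geometric input is Proposition \ref{2} together with the defining data of an AANR. As in the proof of Proposition \ref{2}, embed $X$ in an ANR $Y$ admitting approximative retracts $r_\epsilon : Y \to X$ with $d(x, r_\epsilon(x)) < \epsilon$ for all $x \in X$, and fix the point $x_0$ supplied by Proposition \ref{2}. Since $Y$ is an ANR it is locally contractible, so $x_0$ has a neighbourhood $N$ in $Y$ and a smaller neighbourhood $W \subseteq N$ for which the inclusion $W \hookrightarrow N$ is null-homotopic. By Proposition \ref{2} the neighbourhood $W \cap X$ of $x_0$ in $X$ contains a topologically embedded circle; write $\iota : S^1 \hookrightarrow W \cap X \subseteq X$ for this embedding and set $C := \iota(S^1)$. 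Because $\mathbb{D}^2$ is convex, the map $\iota^{-1} : C \to S^1 \subseteq \mathbb{D}^2$ extends (by Tietze/Dugundji) to a continuous $\tilde g : X \to \mathbb{D}^2$ with $\tilde g|_C = \iota^{-1}$; being defined on the compactum $X$, it is uniformly continuous.

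The main work, and the step I expect to be the obstacle, is the construction of $f : \mathbb{D}^2 \to X$ with $f|_{S^1}$ close to $\iota$. The embedded circle $C$ need not bound a disc inside $X$ — indeed $X$ need not be an ANR — so I would fill it in the ambient ANR instead. As the inclusion $W \hookrightarrow N$ is null-homotopic, the loop $\iota : S^1 \to W \to N$ extends to a map $\bar f : \mathbb{D}^2 \to N \subseteq Y$ with $\bar f|_{S^1} = \iota$. Now push back into $X$ by an approximative retract and set $f := r_\epsilon \circ \bar f : \mathbb{D}^2 \to X$, where $\epsilon > 0$ is to be fixed. On the boundary, $\bar f(z) = \iota(z) \in C \subseteq X$, so the approximative-retract property gives $d(f(z), \iota(z)) = d(r_\epsilon(\iota(z)), \iota(z)) < \epsilon$ for every $z \in S^1$; this is the only place the estimate is needed, and it works precisely because the boundary values of $\bar f$ already lie in $X$.

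It remains to fix $\epsilon$. Using the modulus of uniform continuity of $\tilde g$, choose $\epsilon$ so small that $d(p,q) < \epsilon$ forces $|\tilde g(p) - \tilde g(q)| < \frac{\sqrt 3}{2}$, and take the corresponding $r_\epsilon$. Then for $z \in S^1$ we have $|\tilde g(f(z)) - z| = |\tilde g(f(z)) - \tilde g(\iota(z))| < \frac{\sqrt 3}{2}$, since $\tilde g(\iota(z)) = \iota^{-1}(\iota(z)) = z$. Hence $h = \tilde g \circ f$ satisfies the hypothesis of Lemma \ref{1} on $S^1$, and the reduction of the first paragraph completes the proof. The one point requiring care in the ordering is that $\tilde g$ (hence its modulus of continuity) must be fixed before $\epsilon$, which is unproblematic since $\tilde g$ depends only on the embedding $\iota$ and not on the retract.
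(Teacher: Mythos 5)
Your proposal is correct and follows essentially the same route as the paper: embed $X$ into an ANR $Y$ via Gorniewicz, use Proposition \ref{2} to find an embedded circle near $x_0$, fill it in $Y$ using local contractibility, pull back with an approximative retract chosen via the uniform continuity of the extension $\tilde g$, and finish with Lemma \ref{1}. The only differences are notational (you keep the embedding $\iota$ explicit rather than identifying the circle with $S^1$, and you phrase local contractibility as null-homotopy of $W\hookrightarrow N$ rather than contractibility of a neighbourhood in $Y$), so there is nothing to add.
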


\begin{proof}
By \cite[Theorem 3.4]{Gor99} there exists an ANR $Y$ such that $X$ is an approximative retract of Y. This means we have $X\subseteq Y$ and for all $\epsilon>0$ there exists an $\epsilon$-retract $r_{\epsilon}:Y\rightarrow X$. Applying Proposition \ref{2}, we find some $x_0\in X$ such that every closed neighbourhood of $x_0$ in $X$ admits a topological embedding of $S^1$. Further there exists a closed neighbourhood $U$ of $x_0$ in $Y$ which is contractible in $Y$ since $Y$ is an ANR and hence locally contractible (see \cite[Section V.2]{Bor67}). Consequently, choose an embedding of $S^1$ in $U\cap X\subseteq X$. Moreover, the inclusion $S^1\subseteq U\cap X\subseteq U$ extends to a continuous map $f':\mathbb{D}^2\rightarrow Y$ since $U$ is contractible in $Y$.\\
Now choose a continuous extension $g':X\rightarrow\mathbb{D}^2$ of the canonical inclusiom $S^1\subset\mathbb{D}^2$. This is possible since $\mathbb{D}^2$ is an absolute retract. By compactness of $X$, $g'$ will uniformly continuous. So we find some $\delta>0$ such that $d(x,y)<\delta$ implies $|g'(x)-g'(y)|<\frac{\sqrt{3}}{2}$ for all $x,y\in X$. Let $r_{\delta}:Y\rightarrow X$ be a $\delta$-retract of $X$. We end up in the following situation\\
\[\begin{xy}
\xymatrix{S^1 \ar[r]^{\subseteq} \ar[d]_{\subseteq}&  X \ar[dl]_{g'} \\ \mathbb{D}^2 \ar[r]_{f'} & Y \ar[u]_{r_{\delta}}}
\end{xy}\]
where the upper left triangle commutes exactly and the outer square commutes up to $\delta$ on $S^1$. Setting $f:=r_{\delta}\circ f'$ we check that for $x\in S^1$\\
\[|(g'\circ f)(x)-x|\leq|g'(x)-x|+|g'((r_{\delta}\circ f')(x))-g'(x)|<0+\frac{\sqrt{3}}{2}=\frac{\sqrt{3}}{2}.\]
Applying Lemma \ref{1} gives some continuous $h:\mathbb{D}^2\rightarrow\mathbb{D}^2$ such that $\|h\circ(g'\circ f)-id_{\mathbb{D}^2}\|_{\infty}<1$. If we set $g:=h\circ g'$, we are done. 
\end{proof}

\begin{corollary}\label{4}
Let $C(X)$ be a unital, separable C*-algebra that is weakly semiprojective. Then $X$ is a compact AANR with $dim(X)\leq 1$.
\end{corollary}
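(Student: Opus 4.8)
My plan is to separate the two conclusions. That $X$ is compact and metrizable is immediate from the hypothesis that $C(X)$ is unital and separable, and that $X$ is an AANR whenever $C(X)$ is weakly semiprojective is exactly the topological half of the characterization established by S\o rensen and Thiel in \cite{ST11}, which I would simply cite. Thus the only thing requiring a new argument is the dimension estimate $dim(X)\leq 1$, and I would prove this by contradiction, assuming $dim(X)>1$. Under this assumption Theorem \ref{3} supplies continuous maps $f\colon\mathbb{D}^2\to X$ and $g\colon X\to\mathbb{D}^2$ with $\|g\circ f-id_{\mathbb{D}^2}\|_{\infty}<1$. Passing to function algebras, these dualize to unital $*$-homomorphisms $f^{*}\colon C(X)\to C(\mathbb{D}^2)$ and $g^{*}\colon C(\mathbb{D}^2)\to C(X)$ given by precomposition. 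Writing $\zeta\in C(\mathbb{D}^2)$ for the coordinate function, the estimate of Theorem \ref{3} translates into $\|f^{*}g^{*}(\zeta)-\zeta\|_{\infty}=\|g\circ f-id_{\mathbb{D}^2}\|_{\infty}<1$; since $X$ is compact the supremum is attained, so there is an actual gap $\|f^{*}g^{*}(\zeta)-\zeta\|_{\infty}=1-\eta$ for some $\eta>0$. This is precisely the extremely weak left-invertibility advertised in the introduction.

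The point of this weak one-sided inverse is to transport a known unsolvable lifting problem from $C(\mathbb{D}^2)$ to $C(X)$. Here I would invoke the failure of weak semiprojectivity for $C(\mathbb{D}^2)$ in a robust, quantitative form: there is a sequence of C*-algebras $(B_n)$ (for instance matrix algebras carrying Voiculescu's almost commuting self-adjoints with nonvanishing index) and a $*$-homomorphism $\phi\colon C(\mathbb{D}^2)\to\prod_nB_n/\bigoplus_nB_n$ into the corona such that \emph{no} $*$-homomorphism $\beta\colon C(\mathbb{D}^2)\to\prod_nB_n$ satisfies $\|\pi\beta(\zeta)-\phi(\zeta)\|<1$, where $\pi$ is the quotient map onto the corona. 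The role of Lemma \ref{1} and Theorem \ref{3} is exactly to manufacture a weak inverse whose defect, measured on $\zeta$, stays strictly below this robustness threshold; this is what the numerology running from $\tfrac{\sqrt3}{2}$ through $\tfrac{\pi}{3}$ to $1$ is engineered to guarantee.

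With this in hand I would run the transport. Consider $\phi\circ f^{*}\colon C(X)\to\prod_nB_n/\bigoplus_nB_n$. Since $C(X)$ is assumed weakly semiprojective, for the finite set $F=\{g^{*}(\zeta)\}$ and any $\epsilon>0$ there is a $*$-homomorphism $\beta\colon C(X)\to\prod_nB_n$ (a posteriori only a tail product, which is harmless) with $\|\pi\beta(g^{*}(\zeta))-\phi f^{*}(g^{*}(\zeta))\|<\epsilon$. Setting $\tilde\beta:=\beta\circ g^{*}\colon C(\mathbb{D}^2)\to\prod_nB_n$, one estimates
\[\|\pi\tilde\beta(\zeta)-\phi(\zeta)\|\leq\|\pi\beta(g^{*}\zeta)-\phi f^{*}g^{*}(\zeta)\|+\|\phi(f^{*}g^{*}(\zeta)-\zeta)\|<\epsilon+(1-\eta).\]
Choosing $\epsilon<\eta$ produces a genuine $*$-homomorphism $\tilde\beta$ lifting $\phi$ to within a constant $<1$ on $\zeta$, contradicting the robust unliftability of $\phi$. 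This contradiction forces $dim(X)\leq 1$ and finishes the proof.

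I expect the main obstacle to lie entirely in the quantitative input of the second paragraph: pinning down, or citing in the correct normalization, the statement that $C(\mathbb{D}^2)$ admits a lifting problem whose obstruction survives every perturbation of the generator of size up to $1$, and then verifying that the defect delivered by Theorem \ref{3} is measured against exactly the same normalization of $\zeta$. The topological engineering in Lemma \ref{1}, Proposition \ref{2} and Theorem \ref{3} has already done the genuinely hard work of producing the weak inverse; the remaining care is purely a matter of matching these constants and of bookkeeping the finite sets when weak semiprojectivity is applied.
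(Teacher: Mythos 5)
Your overall architecture is the same as the paper's: get the AANR property by restricting weak semiprojectivity to the commutative category (citing \cite{ST11}), assume $dim(X)>1$, invoke Theorem \ref{3} to produce the weak left inverse, and transport a threshold-one unsolvable lifting problem for $C(\mathbb{D}^2)$ through $f^*$ and $g^*$. Your $\tilde\beta=\beta\circ g^*$ is exactly the paper's $\psi=\varrho\circ\chi$ (the paper defines $\chi$ by $\chi(z)=g$ via the universal property, which is the same map as $g^*$), and your triangle-inequality estimate is the paper's computation verbatim. That part is correct.

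The genuine gap is in your proposed realization of the unsolvable lifting problem, and it is not merely a matter of "normalization": matrix algebras can \emph{never} witness the failure of weak semiprojectivity of $C(\mathbb{D}^2)$. A $*$-homomorphism $C(\mathbb{D}^2)\to\prod_n M_{k_n}/\bigoplus_n M_{k_n}$ corresponds to a normal contraction in the quotient, which lifts to a sequence of contractions $T_n$ with $\|[T_n,T_n^*]\|\to 0$; by Lin's theorem on almost commuting self-adjoint matrices (equivalently: almost normal matrices are close to normal matrices), each $T_n$ is within $o(1)$ of a normal contraction, so the homomorphism lifts approximately, in fact exactly on the generator. Relatedly, ``Voiculescu's almost commuting self-adjoints with nonvanishing index'' do not exist: the index/Bott obstruction occurs for pairs of almost commuting \emph{unitaries} (relevant to $C(\mathbb{T}^2)$) or triples of self-adjoints (relevant to $C(S^2)$), never for pairs of self-adjoints --- that is precisely the content of Lin's theorem, consistent with $\mathbb{D}^2$ being contractible and hence K-theoretically trivial. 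The obstruction for the disc must therefore come from infinite-dimensional building blocks, and this is what the paper cites: \cite[Proposition 3.2]{ST11}, due to Loring, packages the unilateral shift $S$ and the Fredholm-index fact $dist(S,\{N+K\,|\,N \text{ normal},\, K \text{ compact}\})=1$ into a chain-of-ideals lifting problem with threshold exactly $1$. With that citation substituted for your parenthetical instance (and the harmless translation between the chain-of-ideals and product-over-sum pictures of weak semiprojectivity), your proof coincides with the paper's.
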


\begin{proof}
Restriction to the category of commutative C*-algebras shows that $X$ is an AANR. Now assume that $dim(X)>1$ and let $f,g$ as in Theorem \ref{3} be given. We write $z=id_{\mathbb{D}^2}\in C(\mathbb{D}^2)$ for the canonical generator and regard $g$ as an element of $C(X)$. Then we have the induced $*$-homomorphism $f^*:C(X)\rightarrow C(\mathbb{D}^2)$ which satisfies
\[\|f^*(g)-z\|=\|g\circ f-z\|<1\]
by the choice of $f$ and $g$.\\
Now take a closer look at $C(\mathbb{D}^2)$: As noted in \cite[Proposition 3.2]{ST11} it is not weakly semiprojective. The proof shows that there is a C*-algebra $B$, an increasing chain of ideals $J_n$ in $B$ (we will write $J=\overline{\bigcup_n J_n}$) and a homomorphism $\varphi:C(\mathbb{D}^2)\rightarrow B/J$ which form an unsolvable lifting problem in the following way: For every $n\in\mathbb{N}$ and every homomorphism $\psi:C(\mathbb{D}^2)\rightarrow B/J_n$ we get $\|(\pi\circ\psi)(z)-\varphi(z)\|\geq 1$. This construction is due to Loring and based on the fact that \[dist(S,\{N+K\,|\,N,K\in B(H), N\,\text{normal}, K\,\text{compact}\})=1\]
where $S$ denotes the unilateral shift on a separable Hilbert space $H$.\\
Finally, choose $0<\epsilon<1-\|f^*(g)-z\|$. Since $C(X)$ is weakly semiprojective we can find an $n$ and a homomorphism $\varrho:C(X)\rightarrow B/J_n$ such that the inner square in the diagram
\[\begin{xy}
\xymatrix{C(X) \ar@{-->}[r]^{\varrho} \ar[d]^{f^*} & B/J_n \ar[d]^{\pi} \\ C(\mathbb{D}^2) \ar[r]^{\varphi} \ar@{-->}@/^/[u]^{\chi} & B/J}
\end{xy}\]
commutes up to $\epsilon$ on $g\in C(X)$. Since $\|g\|\leq 1$ we may use the universal property of $C(\mathbb{D}^2)$ to define a homomorphism $\chi:C(\mathbb{D}^2)\rightarrow C(X)$ via $\chi(z):=g$. But now we have constructed a lift $\psi:=\varrho\circ\chi:C(\mathbb{D}^2)\rightarrow B/J_n$ such that
\[\begin{tabular}{rl}
$\|(\pi\circ\psi)(z)-\varphi(z)\|$ & $=\|(\pi\circ\varrho)(g)-\varphi(z)\|$ \\
& $\leq\|(\varphi\circ f^*)(g)-\varphi(z)\|+\|(\pi\circ\varrho)(g)-(\varphi\circ f^*)(g)\|$ \\
& $\leq\|\varphi(f^*(g)-z)\|+\epsilon$\\
& $\leq\|f^*(g)-z\|+\epsilon<1.$
\end{tabular}\]
As noted above, such $\psi$ doesn't exist. This shows that our assumption was wrong and hence $dim(X)\leq 1$.
\end{proof}

\begin{corollary}\label{5}
Let $X$ be a compact, metric space. Then the following statements hold:
\[\begin{tabular}{lcl}
$C(X)$ is weakly projective in $\mathcal{S}_1$ & $\Leftrightarrow$ & $X$ is an AAR and $dim(X)\leq 1$ \\
$C(X)$ is weakly semiprojective in $\mathcal{S}_1$ & $\Leftrightarrow$ & $X$ is an AANR and $dim(X)\leq 1$
\end{tabular}\]
\end{corollary}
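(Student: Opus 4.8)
The plan is to reduce both equivalences to facts already in hand, since the $"\Leftarrow"$-implications of each line are provided by \cite[Corollary 6.16]{ST11}; only the forward directions need argument. I would first treat the weakly semiprojective row, whose $"\Rightarrow"$-direction is nothing but Corollary \ref{4}: a weakly semiprojective, unital, separable $C(X)$ forces $X$ to be a compact AANR with $dim(X)\leq 1$. So this case requires no further work.

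For the weakly projective row I would argue in two independent steps. The first step secures the dimension bound essentially for free. Weak projectivity is formally stronger than weak semiprojectivity: given any weak-semiprojectivity lifting problem, with an increasing chain of ideals $J_n$ and $J=\overline{\bigcup_n J_n}$ and a target $\varphi:C(X)\rightarrow B/J$, a lift $\psi':C(X)\rightarrow B$ against the single quotient map $B\rightarrow B/J$ (supplied by weak projectivity) descends along $B\rightarrow B/J_n$ to a lift into any $B/J_n$ with the same approximation estimate. Hence a weakly projective $C(X)$ is in particular weakly semiprojective, and Corollary \ref{4} yields $dim(X)\leq 1$ at once. The second step upgrades "$X$ is an AANR" to "$X$ is an AAR"; here I would imitate the opening sentence of the proof of Corollary \ref{4} and restrict the weak projectivity of $C(X)$ to the full subcategory of commutative, unital, separable C*-algebras, where \cite{ST11} identifies this property with the assertion that $X$ is an AAR.

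I do not anticipate a genuine obstacle at this point, precisely because the analytic heart of the matter---constructing the weakly leftinvertible maps $\mathbb{D}^2\rightarrow X$ of Theorem \ref{3} and feeding them into Loring's unsolvable lifting problem---has already been carried out in Corollary \ref{4}. The only items demanding care are the two bookkeeping facts invoked above: that weak projectivity specializes to weak semiprojectivity in exactly the form Corollary \ref{4} requires, and that the commutative-category characterization of weak projectivity as the AAR condition is available from \cite{ST11}. Both are routine once the definitions of \cite[Section 2]{ST11} are unwound, so the combination of Corollary \ref{4} with these two observations completes the proof.
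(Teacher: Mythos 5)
Your proposal is correct and follows essentially the same route as the paper, whose proof is simply the citation ``Corollary \ref{4} and \cite[Corollary 6.16]{ST11}''. The two bookkeeping facts you spell out---that weak projectivity implies weak semiprojectivity by descending a lift $\psi':C(X)\rightarrow B$ along $B\rightarrow B/J_n$, and that restriction to the commutative subcategory yields the AAR condition---are exactly what the paper leaves implicit, and both are handled correctly.
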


\begin{proof}
Corollary \ref{4} and \cite[Corollary 6.16]{ST11}.
\end{proof}

\section{An application}
With Corollary \ref{5} at hand, it is now an easy task to verify that the results of \cite[Section 6]{ST11} also hold for the case of weakly (semi-)projective, commutative C*-algebras. As an example we show the following 'weak' versions of Proposition 6.7, Proposition 6.8 and Corollary 6.9:

\begin{proposition}\label{6}
Let $X$ be a compact, metric space and $k\in\mathbb{N}$. If $C(X)\otimes M_k$ is weakly (semi-)projective, then $X$ is an AA(N)R.
\end{proposition}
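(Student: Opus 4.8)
The plan is to reduce the statement to the standard description of AA(N)Rs as approximative (neighbourhood) retracts and to produce the required $\epsilon$-retracts by feeding a carefully chosen lifting problem into the weak (semi-)projectivity of $C(X)\otimes M_k$. I would fix an embedding of the compact metric space $X$ into the Hilbert cube $Q$. To detect the AAR-property I would use the single restriction quotient $\pi:C(Q)\otimes M_k\twoheadrightarrow C(X)\otimes M_k$, whose kernel consists of the functions vanishing on $X$. For the AANR-property I would instead fix a decreasing sequence of closed neighbourhoods $U_1\supseteq U_2\supseteq\cdots$ of $X$ in $Q$ with $\bigcap_n U_n=X$, put $B=C(U_1)\otimes M_k$, let $J_n\trianglelefteq B$ be the increasing ideal of functions vanishing on $U_n$, and observe that $B/J_n\cong C(U_n)\otimes M_k$ while $B/\overline{\bigcup_n J_n}\cong C(X)\otimes M_k$.

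In either case I would apply weak (semi-)projectivity to the identity homomorphism $C(X)\otimes M_k\to C(X)\otimes M_k$, regarded as landing in the relevant quotient, with tolerance $\epsilon>0$ and test set $F=\{f_1\otimes 1,\dots,f_N\otimes 1\}$, where $f_1,\dots,f_N\in C(X)$ are the restrictions to $X$ of the first $N$ coordinate projections of $Q$. This yields a unital $*$-homomorphism $\Psi:C(X)\otimes M_k\to C(Y)\otimes M_k$, with $Y=Q$ in the projective case and $Y=U_n$ for some large $n$ in the semiprojective case, satisfying $\|\Psi(f_i\otimes 1)|_X-f_i\|<\epsilon$ for all $i$.

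The heart of the argument is to extract from $\Psi$ an honest continuous map $\rho:Y\to X$, and this is the step I expect to be the main obstacle, since it is where the noncommutative datum $\Psi$ must be converted into a map of spectra. I would argue fibrewise: for each $y\in Y$ evaluation gives a unital $*$-homomorphism $\Psi_y:C(X)\otimes M_k\to M_k$, and its restriction to $1\otimes M_k$ is a unital endomorphism of $M_k$, hence injective (as $M_k$ is simple) and therefore surjective by a dimension count, so $\Psi_y(1\otimes M_k)=M_k$. Consequently $\Psi_y(C(X)\otimes 1)$ lies in the commutant $\mathbb{C}\cdot 1$ of $M_k$, which shows that $\Psi$ carries the central subalgebra $C(X)\otimes 1$ into $C(Y)\otimes 1$. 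The induced unital $*$-homomorphism $C(X)\to C(Y)$ is, by Gelfand duality, of the form $\rho^*$ for a unique continuous map $\rho:Y\to X$; here unitality, guaranteed by working in $\mathcal{S}_1$, is exactly what makes the fibrewise endomorphism surjective and so is essential to the argument.

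It then remains to read off the retraction estimate. Since $\Psi(f_i\otimes 1)=(f_i\circ\rho)\otimes 1$, the approximation condition becomes $\|f_i\circ(\rho|_X)-f_i\|<\epsilon$ for $i=1,\dots,N$; choosing $N$ large and $\epsilon$ small relative to the metric of $Q$ forces $d(\rho(x),x)<\delta$ for all $x\in X$, so that $\rho$ is a $\delta$-retract of $X$ defined on $Y$. Letting $\delta\to 0$ exhibits $X$ as an approximative retract of $Q$ in the projective case; in the semiprojective case, restricting each $\rho$ to the open neighbourhood $\mathrm{int}(U_n)\supseteq X$, which is an ANR as an open subset of $Q$, exhibits $X$ as an approximative neighbourhood retract. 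By the characterization of AA(N)Rs as in \cite[Theorem 3.4]{Gor99} this shows that $X$ is an AA(N)R, completing the proof.
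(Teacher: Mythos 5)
Your proof is correct, and its overall strategy --- turn the neighbourhoods of $X$ in an ambient space into an increasing chain of ideals, approximately lift the identity of $C(X)\otimes M_k$ on a finite set of central test functions, extract a continuous map of spectra from the approximate lift, and read off an $\epsilon$-retraction --- is the same as the paper's. The differences lie in three technical choices, and they are worth comparing. First, the paper verifies the definition of AA(N)R directly for an \emph{arbitrary} embedding $\iota:X\to Y$ into a compact metric space, whereas you work with one fixed embedding $X\hookrightarrow Q$ into the Hilbert cube; this is legitimate, but it silently invokes the (standard, yet nontrivial) theorem that the approximative (neighbourhood) retract property is independent of the chosen embedding, i.e.\ that a single Hilbert-cube embedding suffices to test AA(N)R-ness. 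That fact goes back to Clapp and is not literally what \cite[Theorem 3.4]{Gor99} states as it is used in this paper (there it produces an ANR of which a given AANR is an approximative retract), so your citation should be sharpened; the paper's formulation avoids needing this input altogether. Second, your test functions are the coordinate functions of $Q$, which are tied to that specific ambient space, while the paper uses the distance functions $g_i=d(\cdot,x_i)\cdot 1_k$ to an $\frac{\epsilon}{3}$-dense finite set, which make sense in any $Y$ and yield the retraction estimate by a short triangle-inequality contradiction. Third --- and this is the nicest feature of your write-up --- you prove the key extraction step yourself: the fibrewise observation that a unital $*$-homomorphism $C(X)\otimes M_k\to M_k$ restricts to an automorphism of $1\otimes M_k$ (injective by simplicity, surjective by dimension count), hence sends $C(X)\otimes 1$ into the scalars, so that $\Psi$ maps centre to centre and Gelfand duality produces $\rho$. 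The paper instead cites \cite[Proposition 6.6]{ST11} for this; your argument is a correct, self-contained proof of exactly the part of that proposition that is needed, the unitaries $u_{\iota(x)}$ appearing there being irrelevant once the test functions are central. In short: your route buys a self-contained operator-algebraic core at the price of an extra topological input, while the paper's route avoids that input at the price of quoting \cite{ST11}.
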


\begin{proof}
Fix a metric $d$ on $X$ and suppose we are given a compact metric space $Y$ with an embedding $\iota:X\rightarrow Y$. We have to show that $\iota(X)$ is an approximative (neighbourhood) retract of $Y$ (compare \cite[Section I.3]{Gor99}).\\
So let $\epsilon>0$ and a decreasing sequence of closed neighbourhoods $U_n$ of $\iota(X)$ with $U_1=Y$ and $\bigcap_n U_n=\iota(X)$ be given. By compactnesss of $X$ we can find a finite subset $\{x_1,\cdots,x_n\}$ which is $\frac{\epsilon}{3}$-dense in $X$. For $i=1,\cdots,n$ define $g_i\in C(X,M_k)$ by
\[g_i(x)=\begin{pmatrix}d(x,x_i) \\ & \ddots \\ & & d(x,x_i)\end{pmatrix}.\]
By assumption we can find an $n$ and a homomorphism $\psi:C(X,M_k)\rightarrow C(U_n,M_k)$ with $\|((\iota_*)_k\circ\psi)(g_i)-g_i\|_{\infty}<\frac{\epsilon}{3}$ for all $i=1,\cdots,n$. Note that we may choose $n=1$ in the weakly projective case.\\
Exactly as in the proof of \cite[Proposition 6.6]{ST11} there is now a continuous map $\lambda: U_n\rightarrow X$ with
\[ev_x\circ((\iota_*)_k\circ\psi)=Ad_{u_{\iota(x)}}\circ ev_{(\lambda\circ\iota)(x)}\]
for suitable unitaries $u_{\iota(x)}\in M_k$ and all $x\in X$. We claim that $d(x,(\lambda\circ\iota)(x))<\epsilon$ for all $x\in X$, i.e. $\lambda$ is an $\epsilon$-retraction. Since $\epsilon$ was arbitrary, this will finish the proof. Now assume that there is some point $x\in X$ with $d(x,(\lambda\circ\iota)(x))\geq\epsilon$ and fix some $i$ with $d(x,x_i)<\frac{\epsilon}{3}$. Then
\[\begin{tabular}{rl}
$\|g_i((\lambda\circ\iota)(x))\|$ & $=\|\left(Ad_{u_{\iota(x)}}\circ ev_{(\lambda\circ\iota)(x)}\right)(g_i)\|=\|\left(ev_x\circ((\iota_*)_k\circ\psi)\right)(g_i)\|$ \\
& $\leq \|g_i(x)\|+\|((\iota_*)_k\circ\psi)(g_i)-g_i\|_{\infty}<\|g_i(x)\|+\frac{\epsilon}{3}$
\end{tabular}\]
contradicting the fact that
\[\begin{tabular}{rl}
$\|g_i((\lambda\circ\iota)(x))\|-\|g_i(x)\|$ & $=d((\lambda\circ\iota)(x),x_i)-d(x,x_i)$ \\
& $\geq d((\lambda\circ\iota)(x),x)-2d((x,x_i))\geq \epsilon -2\frac{\epsilon}{3}=\frac{\epsilon}{3}$.
\end{tabular}\]
\end{proof}

\begin{proposition}\label{7}
Let $X$ be a compact, metric space and $k\in\mathbb{N}$. If $C(X)\otimes M_k$ is weakly semiprojective, then $dim(X)\leq 1$.
\end{proposition}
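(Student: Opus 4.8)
The plan is to imitate the proof of Corollary \ref{4}, carrying the matrix factor $M_k$ through the entire argument. Restricting to commutative C*-algebras and applying Proposition \ref{6} first shows that $X$ is a compact AANR. Assume, for contradiction, that $\dim(X)>1$. Then Theorem \ref{3} provides continuous maps $f\colon\mathbb{D}^2\to X$ and $g\colon X\to\mathbb{D}^2$ with $\|g\circ f-\mathrm{id}_{\mathbb{D}^2}\|_{\infty}<1$. Writing $z=\mathrm{id}_{\mathbb{D}^2}\in C(\mathbb{D}^2)$ and regarding $g$ as a normal contraction in $C(X)$, these induce $f^{*}\colon C(X)\to C(\mathbb{D}^2)$ and, via the universal property of $C(\mathbb{D}^2)$, a unital homomorphism $\chi\colon C(\mathbb{D}^2)\to C(X)$ with $\chi(z)=g$, together with their amplifications $f^{*}\otimes\mathrm{id}$ and $\chi\otimes\mathrm{id}$ on tensoring with $M_k$.

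Next I would amplify Loring's unsolvable lifting problem from \cite[Proposition 3.2]{ST11}. With $(B,(J_n),J,\varphi)$ as in Corollary \ref{4}, set $\hat B=B\otimes M_k$, $\hat J_n=J_n\otimes M_k$ and $\hat J=J\otimes M_k=\overline{\bigcup_n\hat J_n}$, so that $\hat B/\hat J\cong(B/J)\otimes M_k$ and $\hat B/\hat J_n\cong(B/J_n)\otimes M_k$. Tensoring $\varphi\circ f^{*}$ with $\mathrm{id}_{M_k}$ yields a homomorphism $\hat\varphi=(\varphi\circ f^{*})\otimes\mathrm{id}\colon C(X)\otimes M_k\to\hat B/\hat J$. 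Choosing $0<\epsilon<1-\|g\circ f-z\|_{\infty}$ and invoking weak semiprojectivity of $C(X)\otimes M_k$ for the finite set $\{g\otimes 1_k\}\cup\{1\otimes e_{ij}\}$, I obtain an $n$ and a homomorphism $\varrho\colon C(X)\otimes M_k\to\hat B/\hat J_n$ with $\|(\pi\otimes\mathrm{id})\varrho(g\otimes 1_k)-\hat\varphi(g\otimes 1_k)\|<\epsilon$. With $\Psi:=\varrho\circ(\chi\otimes\mathrm{id})\colon C(\mathbb{D}^2)\otimes M_k\to\hat B/\hat J_n$ one has $\Psi(z\otimes 1_k)=\varrho(g\otimes 1_k)$, and the triangle inequality, using $\hat\varphi(g\otimes 1_k)=\varphi(g\circ f)\otimes 1_k$ and $(\varphi\otimes\mathrm{id})(z\otimes 1_k)=\varphi(z)\otimes 1_k$, gives
\[\|(\pi\otimes\mathrm{id})\Psi(z\otimes 1_k)-(\varphi\otimes\mathrm{id})(z\otimes 1_k)\|<\epsilon+\|g\circ f-z\|_{\infty}<1,\]
exactly as in the proof of Corollary \ref{4}.

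The main obstacle, and the only genuinely new ingredient, is to show that this amplified problem is still unsolvable, i.e.\ that $\|(\pi\otimes\mathrm{id})\Psi(z\otimes 1_k)-(\varphi\otimes\mathrm{id})(z\otimes 1_k)\|\ge 1$ for every $n$ and every unital $\Psi$. I expect this to come from the matrix analogue of Loring's estimate, namely $\mathrm{dist}\big(S\otimes 1_k,\{N+K\mid N\text{ normal},\,K\text{ compact}\}\big)=1$ in $M_k(B(H))$. The lower bound is an index computation: if $\|S\otimes 1_k-(N+K)\|<1$, then the image of $N$ in the Calkin algebra of $H^{\oplus k}$ is an invertible normal element lying within distance $<1$ of the essentially unitary $S\otimes 1_k$, which forces $\mathrm{ind}(S\otimes 1_k)=0$; but $\mathrm{ind}(S\otimes 1_k)=k\cdot\mathrm{ind}(S)=-k\neq 0$, a contradiction. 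The upper bound is immediate from $\|S\otimes 1_k\|=1$. Since $\Psi(z\otimes 1_k)$ is the image of the normal contraction $z\otimes 1_k$ and hence normal, feeding this distance estimate through the telescope $(\hat B,(\hat J_n),\hat J)$ precisely as in the scalar case produces the required lower bound $\ge 1$, contradicting the displayed estimate. Therefore $\dim(X)\le 1$.
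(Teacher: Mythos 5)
Your proposal is correct and takes essentially the same route as the paper: tensor Loring's telescope and obstruction from Corollary \ref{4} with $M_k$, use weak semiprojectivity of $C(X)\otimes M_k$ on $g\otimes 1_k$, compose with $\chi\otimes\mathrm{id}$, and derive the contradiction from $\mathrm{dist}\bigl(S\otimes 1_k,\{N+K\,:\,N\ \text{normal},\ K\ \text{compact}\}\bigr)=1$. The only difference is that you additionally sketch the Fredholm-index proof of this amplified distance fact, which the paper asserts without proof.
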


\begin{proof}
By Proposition \ref{6} we know that $X$ is an AANR. We may now proceed as in the proof of corollary \ref{4} and use the assumption $dim(X)>1$ to construct a solution (up to a constant strictly less than one on $\{z\}$) to the lifting problem
\[\begin{xy}\xymatrix{& & B/J_n\otimes M_k \ar[d]^{(\pi)_k} \\ C(\mathbb{D}^2) \ar@{-->}[urr]^{\psi} \ar[rr]_{\varphi\oplus\cdots\oplus\varphi} & & B/J\otimes M_k}\end{xy}\]
Note that this lifting problem is still unsolvable since
\[dist(S^{\oplus k},\{N+K\,|\,N,K\in B(H^{\oplus k})\cong B(H)\otimes M_k, N\,\text{normal}, K\,\text{compact}\})=1\]
holds for all $k\in\mathbb{N}$. Hence we get the desired contradiction and conclude that $dim(X)\leq 1$.
\end{proof}

A combination of Corollary \ref{5}, Proposition \ref{6} and Proposition \ref{7} now gives

\begin{corollary}
Let $X$ be a compact, metric space and $k\in\mathbb{N}$. If $C(X)\otimes M_k$ is weakly projective, then so is $C(X)$. Analogously, if $C(X)\otimes M_k$ is weakly semiprojecive, then so is $C(X)$.
\end{corollary}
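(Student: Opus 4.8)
The plan is to reduce everything to the purely topological characterization furnished by Corollary \ref{5}, which asserts that weak (semi-)projectivity of a commutative C*-algebra $C(X)$ is equivalent to $X$ being an AA(N)R together with the dimension bound $dim(X)\leq 1$. Since these two conditions on $X$ make no reference to the integer $k$, the task amounts to extracting them from the hypothesis on $C(X)\otimes M_k\cong C(X,M_k)$. I would therefore not attempt any direct lifting; instead I would run the hypothesis on the matrix amplification through the matricial versions of the AA(N)R-criterion and of the dimension estimate established in Propositions \ref{6} and \ref{7}, and then feed the resulting topological data back into Corollary \ref{5}.

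Concretely, for the weakly semiprojective assertion I assume $C(X)\otimes M_k$ weakly semiprojective. Proposition \ref{6} then shows that $X$ is an AANR, while Proposition \ref{7} shows that $dim(X)\leq 1$. These two conclusions are precisely the right-hand side of the weak-semiprojectivity equivalence in Corollary \ref{5}, so that corollary yields weak semiprojectivity of $C(X)$.

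For the weakly projective assertion the absolute-retract part is again immediate from Proposition \ref{6}, which gives that $X$ is an AAR. The only point requiring a little care is the dimension bound, since Proposition \ref{7} is phrased for the weakly semiprojective case. Here I would invoke the elementary fact that weak projectivity implies weak semiprojectivity: given a lifting problem against $B\to B/\overline{\bigcup_n J_n}$, an approximate lift into $B$ composed with the quotient map $B\to B/J_n$ already solves the corresponding approximate lifting problem at any stage $n$. Hence $C(X)\otimes M_k$, being weakly projective, is in particular weakly semiprojective, so Proposition \ref{7} applies and again gives $dim(X)\leq 1$. Combining this with $X$ being an AAR and applying the weak-projectivity half of Corollary \ref{5} completes the argument. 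I expect no serious obstacle, since all the genuine difficulty has already been absorbed into Propositions \ref{6} and \ref{7} (and ultimately into Theorem \ref{3} and Lemma \ref{1}); the only thing one must not overlook is the phrasing mismatch with Proposition \ref{7}, resolved by the routine observation that weak projectivity is the stronger property.
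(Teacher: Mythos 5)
Your proposal is correct and matches the paper's proof, which is exactly the one-line combination of Corollary \ref{5}, Proposition \ref{6} and Proposition \ref{7}. Your extra observation that weak projectivity implies weak semiprojectivity (so that Proposition \ref{7} applies in the projective case) is a careful and valid way to handle the phrasing mismatch the paper glosses over.
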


\noindent {\bfseries Acknowledgements.} I want to thank Hannes Thiel for his comments and help on the proof of Proposition \ref{2}.

\end{document}